\newcommand{\cP}{\mathcal{P}}
\newcommand{\cQ}{\mathcal{Q}}
\newcommand{\K}{\mathbbm{K}}
\newcommand{\F}{\mathbbm{F}}
\newcommand{\one}{\mathbbm{1}}
\newcommand{\zero}{\emptyset}
\newcommand{\refeq}[1]{\overset{(\ref{#1})}{=}}
\newcommand{\true}[1]{\delta_{\{#1\}}}
\newcommand{\altS}{\hat{S}}
\newcommand{\altT}{\hat{T}}
\newcommand{\altU}{\hat{U}}
\newcommand{\altV}{\hat{V}}
\newtheorem{theorem}{Theorem}[section]
\newtheorem{corollary}[theorem]{Corollary}
\newtheorem{lemma}[theorem]{Lemma}
\newtheorem{problem}[theorem]{Problem}
\theoremstyle{definition}
\newcommand{\mydet}[1]{\det \left[ #1 \right]}
\newcommand{\ifno}[2]{
\makeatletter
\@ifundefined{#1}{ #2 }{}
}
\newcommand{\AND}{\qquad\text{and}\qquad}
\author{Adam W. Marcus \\ Princeton University}
\title{Discrete unitary invariance}
\begin{document}

\maketitle

\begin{abstract}

We show that certain determinantal functions of multiple matrices, when summed over the symmetries of the cube, decompose into functions of the original matrices.
These are shown to be true in complete generality; that is, no properties of the underlying vector space will be used apart from normal ring properties, and therefore hold in any commutative ring.
All proofs are elementary --- in fact, the majority are simply derivations.

\end{abstract}

\section{Introduction}

The goal of this paper is to show that certain functions which take multiple matrices as inputs can be reduced to a combination of unitarily invariant functions operating on disjoint subsets of the input matrices.
As such, the original functions will be invariant with respect to unitary conjugation among the subsets of input matrices.
We will achieve these functions by summing over a finite group of unitary matrices.
This group is guaranteed to exist whenever the underlying space is a commutative ring (simply by satisfying the ring conditions) and so all formulas will continue to hold when integrated against any measure which is invariant under the action of this group.
In particular, this will be true for any Haar measure defined over unitary matrices (where the property of being ``unitary'' is dictated by the underlying ring operations).
As such, they generalize results in \cite{mypolys, polys} that were used in the development of a finite version of free probability.
Formulas of this type were also used in \cite{IF4} in combination with the ``method of interlacing polynomials'' (developed in \cite{IF1, IF2}) to show the existence of Ramanujan graphs of all sizes and degrees.
These formulas can be viewed as quadratures formulas, polarization formulas, or statements about zonal spherical polynomials.

Will go out of our way to keep all proofs as elementary as possible.
The presentation is designed to highlight the generality of the results; in particular, it should be noted that nothing done in this article requires the existence of mulitiplicative inverses of any form.\footnote{
We will exercise one exception to the ``no dividing'' rule with respect to writing factorials --- rather than introduce additional notation (the Pochhammer symbol, for example), we will leave constants of the type $m!/n!$ where $m \geq n$ in a ``divided'' form.
In each such case, the resulting division will result in a positive integer and is used more as a notational convenience than as an actual scalar quantity (see the discussion at the end of Section~\ref{sec:rings}).}

\subsection{Organization}

In Section~\ref{sec:prelims}, we will introduce/review the notations, terminology and definitions we will use.
This will include defining two subgroups of matrices that we call $\cQ$ and $\cP$.
We also state our main technical lemma (Lemma~\ref{lem:minors}).
Section~\ref{sec:lemmas} contains our main lemmas regarding the symmetries of $\cQ$ and $\cP$ and then in Section~\ref{sec:apps}, we use these lemmas to prove identities involving some determinantal functions that exhibit these symmetries.
In Section~\ref{sec:cp}, we give applications of the previous sections to obtain formulas for characteristic polynomials of the type used in \cite{mypolys, polys}.
Finally, in Section~\ref{sec:end}, we conclude with an open problem.

\section{Definitions and such}\label{sec:prelims}

We begin with some notation:
As is common, we will write $[n]$ to denote the set $\{1, 2, \dots n \}$ and $\binom{S}{k}$ to denote the subsets of $S$ with size $k$.
We will write $\overline{S}$ to denote the complement of $S$ when the ambient set is clear (otherwise we will write $\Omega \setminus S$) and $S + T$ to denote the symmetric difference of $S$ and $T$ (the addition of their characteristic vectors in $\F_2$).
We will write $|S|$ to denote the cardinality of $S$ and 
\[
\|S\|_{1} = \sum_{s \in S} s
\]
for the $\ell^1$ norm.
It is easy to check that
\begin{equation}
\label{eq:sum1}
(-1)^{\|S\|_1 + \|T\|_1} = (-1)^{\|S + T\|_1}
\end{equation}
for any sets $S, T \subseteq [n]$, a fact that will be used throughout.
For $m \geq n$ and sets $T \subseteq [n]$ and $R = \{ R_1 < \dots < R_n \} \in \binom{[m]}{n}$, we define the induced set
\begin{equation}
\label{eq:setset}
T(R) = \{ R_{t} : t \in T \} \in \binom{R}{k} \subseteq \binom{[m]}{k}
\end{equation}
where the ordering of $R$ is taken to be the natural ordering of $[m]$. 
For example, given $R = \{ 1, 4, 7, 9 \}$ and $T = \{ 2, 3 \}$, we have $T(R) = \{ 4, 7 \}$.

\subsection{Rings}\label{sec:rings}

We will use $\K$ throughout to denote a commutative ring with multiplicative identity $\one$ and additive identity $\zero$.
As is customary, we will denote the additive inverse of $\one$ as $(-\one)$.
In the event that $\one = (-\one)$, we will call the ring {\em boolean}.
For an assertion $X$, we define the function $\true{X}$ as
\[
 \true{X}
=\begin{cases}
\one & \text{ if $X$ is true }  \\
\zero & \text{ if $X$ is false }.
\end{cases}
\]
We denote the ring of univariate polynomials over $\K$ as $\K[x]$ ($x$ being a formal variable).

Let $S_n$ be the symmetric group on $n$ elements.
The mapping $\sigma \mapsto P_\sigma$ where $P_\sigma$ is the $n \times n$ matrix with
\[
P_{\sigma}(i, j) 
= \true{\sigma(i) = j}
=
\begin{cases}
 \one & \text{if } \sigma(i) = j \\
 \zero & \text{otherwise}
\end{cases}
\]
is an injective homomorphism from $S_n$ into $\K^{n \times n}$; we will denote the image of this mapping as $\cP_n$ and refer to members of $\cP$ as {\em permutation matrices}.

For a set $S \subset [n]$, the mapping $S \mapsto Q_S$ where $Q_S$ is the $n \times n$ diagonal matrix with 
\[
 Q_S(i, i) 
 =
\begin{cases}
 -\one & \text{if } i \in S \\
 \one & \text{if } i \notin S \\
\end{cases}
\]
is an injective homomorphism from $\F_2^n$ into $\K^{n \times n}$; we call the image of this mapping $\cQ_n$, and refer to its members as {\em sign matrices}.

We remark that the only matrix in $\cP_n \cap \cQ_n$ is the identity matrix $I$, and that $\cQ_n  = \{ I \}$ if $\K$ is boolean.
Also, it should be clear (via the associated homomorphisms) that each element of $\cP_n$ and $\cQ_n$ is invertible.
One can check that the group generated by $\cP_n$ and $\cQ_n$ (via multiplication) is the largest subalgebra that can be guaranteed to be invertible (for general $\K$).
This group is isomorphic to the Coxeter group $B_n$ (or, dually, $C_n$), the group of symmetries of the $n$-cube (or, dually, the $n$-crosspolytope).
It is also known as the hyperoctahedral group.

Note that the only explicit ring elements we will use are $\{ (-\one, \zero, \one) \}$.
All other ring elements will appear implicitly as the elements of matrices.
If other constants appear (like positive integers), they are not to be considered ring elements.
Rather for an element $k \in \K$ and a nonnegative integer $n$, the following interpretations should be used
\begin{enumerate}
 \item $n k$: adding $n$ copies of $k$ (using ring addition)
 \item $k^n$: multiplying $n$ copies of $k$ (using ring multiplication).
\end{enumerate}
We direct the reader to \cite{lang} for definitions related to rings.

\subsection{Determinants}

Since $\K$ is commutative, we can take the usual definition of determinant for matrices in $\K^{n \times n}$:
\begin{equation}
\label{def:det}
\mydet{A} = \sum_{\sigma \in S_n} (-\one)^{|\sigma|} \prod_i A(i, \sigma(i))
\end{equation}
where $S_n$ denotes the symmetric group on $n$ elements and $|\sigma|$ denotes the parity of $\sigma$.
Note that all additions and multiplications are considered to be with respect to the ring operations.

For sets $S, T \in \binom{[n]}{k}$, we recall that the $S, T$ {\em minor} of $A$ is defined as
\[
[A]_{S, T} = 
\begin{cases}
\mydet{\{A(i,j)\}_{i\in S, j\in T}} & \text{for } k > 0 \\
1 & \text{for } k = 0.
\end{cases}
\]
Here, the notation 
\[
 \mydet{\{A(i,j)\}_{i\in S, j\in T}}
\]
is used to denote the determinant of the submatrix of $A$ with rows indexed by $S$ and columns indexed by $T$.
For $A \in \K^{n \times n}$, we also define
\begin{equation}
 \label{def:A^k}
 [A]^{(k)} = \sum_{S \in \binom{[n]}{k}} [A]_{S, S}
\end{equation}
(with $[A]^{(0)} = 1$).
We show in Lemma~\ref{lem:cp} that $[A]^{(k)}$ is equal to the coefficient of $x^{n-k}$ in the polynomial
\[
 \mydet{x I + A} \in \K[x]. 
\]
Thus, to the extent that it makes sense for a given $\K$ (the reals, for example), $[A]^{(k)}$ can be viewed as the $k$th elementary symmetric polynomial evaluated at the $n$ eigenvalues of $A$.

Our main technical tools will be the following decompositions:
\begin{lemma}
\label{lem:minors}
For $A \in \K^{m \times n}$ and $B \in \K^{n \times p}$, we have
\begin{equation}
\label{eq:mult}
[AB]_{S, T} = \sum_{U \in \binom{[n]}{k}} [A]_{S, U} [B]_{U, T}
\end{equation}
for all $S \in \binom{[m]}{k}$ and $T \in \binom{[p]}{k}$, and
\begin{equation}
\label{eq:add}
\mydet{A + B} = \sum_{S, T \subseteq [n]}(-\one)^{\|S + T\|_{1}} [A]_{S, T} [B]_{\overline{S}, \overline{T}}
\end{equation}
for all $A, B \in K^{n \times n}$.
\end{lemma}
\begin{proof}
The first identity is a direct application of the well known Binet--Cauchy theorem, while the second is merely a grouping of terms in (\ref{def:det}), see \cite{hornjohnson}.
\end{proof}

One technical issue that will arise concerning (\ref{eq:add}) is that the correctness of (\ref{eq:add}) depends upon the ground set being $[n]$ (otherwise the $\| S + T \|_{1}$ term would be altered).
In order to apply it to quantities like $[A + B]_{X, X}$ where $X$ is some subset of $[n]$, we will need to keep track of rows both in the frame of $X$ and in the frame of the larger matrix.
This motivates the introduction of the induced set constructs defined in (\ref{eq:setset}).

\section{Symmetries of $\cQ$ and $\cP$}\label{sec:lemmas}

In this section, we prove the main technical lemmas that we will use.
The first regards symmetries of $\cQ_n$ and the second the symmetries of $\cP_n$.

\begin{lemma}
\label{lem:cancel}
 Let $S, T, U, V \subseteq [n]$ with $|S| = |T|$ and $|U| = |V|$.
Then 
\begin{equation}
 \label{eq:cancel}
 \sum_{Q \in \cQ_n} [Q]_{S, T} [Q]_{U, V} = (\one + \one)^n \true{S = T = U = V}.
\end{equation}
\end{lemma}
\begin{proof}
We will refer to the matrices in $Q$ via the homomorphism from subsets of $[n]$.
Hence
\[
\sum_{Q \in \cQ_n} [Q]_{S, T} [Q]_{U, V} 
= \sum_{X \subseteq [n]} [Q_X]_{S, T} [Q_X]_{U, V}.
\]
We now claim that for fixed $X, U, V$, we have
\begin{equation}
\label{eq:fourier1}
 [Q_X]_{U, V} = 
 (-\one)^{|U \cap X|} \true{U=V}
\end{equation}
 To see this, note that $[Q_S]_{U, V} = \zero$ for $U \neq V$ since $Q_S$ is diagonal (the case where $U = V$ then follows directly from the definition).
 Hence it suffices to show
 \[
 \sum_{Q \in \cQ_n} [Q]_{U, U} [Q]_{V, V} = (\one + \one)^n \true{U=V}.
\]

Let $A$ be the $2^n \times 2^n$ matrix (indexed by subsets of $[n]$)
\[
 A(S, T) = (-\one)^{|S \cap T|}.
\]
It is easy to check that $A$ is an $n$-fold tensor product over the Fourier matrix 
\[
 F = 
\begin{pmatrix}
 \one & \one \\
 \one & -\one
\end{pmatrix}
\]
(where the multiplication in the tensor product is ring multiplication).
Since $A$ is symmetric, we have
\begin{equation}
 \label{eq:id}
A A^T
= A^T A  
= A^2 
= \otimes^n F^2
= \otimes^n \left( (\one + \one) I_2 \right)
= (\one + \one)^n I_{2^n}.
\end{equation}
On the other hand, by (\ref{eq:fourier1}), we have
\[
\sum_{X \subseteq [n]} [Q_X]_{U, U} [Q_X]_{V, V} 
= \sum_{X \subseteq [n]} (-\one)^{|X \cap U|} (-\one)^{|X \cap V|} 
= \sum_{X} A(X, U)A(X, V).
\]
and so by (\ref{eq:id}), 
\[
 \sum_{X \in [n]} [Q]_{U, U} [Q]_{V, V} = (\one + \one)^n I_{2^n}(U, V)
\]
which is exactly what was needed.
\end{proof}

\begin{lemma}
 \label{lem:cancel2}
 For $S, T, U \in \binom{[n]}{k}$, we have
\begin{equation}
 \label{eq:cancel2}
 \sum_{P \in \cP_n} [P]_{S, T} [P^{-1}]_{U, S} = k!(n-k)!\true{T=U}.
\end{equation}
\end{lemma}
\begin{proof}
 As in Lemma~\ref{lem:cancel}, we will refer to the matrices $P$ via their homomorphism with $S_n$, so that 
\[
\sum_{P \in \cP_n} [P]_{S, T} [P^{-1}]_{U, S} 
= \sum_{\pi \in S_n} [P_{\pi}]_{S, T} [P_{\pi^{-1}}]_{U, S} 
= \sum_{\pi \in S_n} [P_{\pi}]_{S, T} [P_{\pi}]_{S, U}.
\]
For $\pi \in S_n$, we will write $\pi(U)$ to denote the image of the elements of $U$ under $\pi$.
Note that for fixed $\pi, U, V$, we have $[P_\pi]_{U, V} = \zero$ whenever $\pi(U) \neq V$ (since there will be a row of $0's$).
Hence the sum will be zero whenever $U \neq T$.
When $U = T$, on the other hand, the sum is {\em still} zero unless $\pi(S) = T$, and there are $k!(n-k)!$ ways for that to happen.
In each such case, the product becomes a perfect square, and so is $\one$ regardless of the sign of the permutation.
\end{proof}

Note that the factor $k! (n-k)!$ should not be considered as an element in $\K$ (and so should not be multiplied using ring multiplication).
The term $k! (n-k)! \one$ should instead be interpreted as a sum of $k!(n-k)!$ copies of $\one$ (which {\em is} an element in $\K$).

\section{Applying symmetries to determinants}\label{sec:apps}

\begin{lemma}
\label{lem:symm}
Let $A \in \K^{n \times n}, B \in \K^{m \times n}, C \in K^{n \times r}$.
Then
\[
 \sum_{Q \in \cQ_n }  \sum_{P \in \cP_n} [B (Q P) A (Q P)^{-1} C ]_{X, Y} 
= (\one + \one)^n k!(n-k)! [BC]_{X, Y}  [A]^{(k)}.
\]
for all $X \in \binom{[m]}{k}$ and $Y \in \binom{[r]}{k}$.
\end{lemma}
\begin{proof}
By Lemma~\ref{lem:minors}, we have for fixed $Q \in \cQ_n$ and $P \in \cP_n$, 
\[
[B Q P A P^{-1} Q C]_{X, Y} 
 \refeq{eq:mult} \sum_{S, T, U, V \in \binom{[n]}{k}} [B]_{X, S} [Q]_{S, T} [PAP^{-1}]_{T, U} [Q]_{U, V} [C]_{V, Y}.
\]
Hence by Lemma~\ref{lem:cancel}
\begin{align*}
\sum_{Q \in \cQ_n } [B Q P A P^{-1} Q C]_{X, Y} 
 &= \sum_{S, T, U, V \in \binom{[n]}{k}} [B]_{X, S} [P A P^{-1}]_{T, U} [C]_{V, Y} (\one + \one)^n \true{S = T = U = V}
\\&= (\one + \one)^n \sum_{U \in \binom{[n]}{k}} [B]_{X, U} [P A P^{-1}]_{U, U} [C]_{U, Y}
\\& \refeq{eq:mult} (\one + \one)^n \sum_{S, T, U \binom{[n]}{k}} [B]_{X, U} [P]_{U, S} [A]_{S, T} [P^{-1}]_{T, U} [C]_{U, Y}.
\end{align*}
Now applying Lemma~\ref{lem:cancel2}, 
\begin{align*}
\sum_{Q \in \cQ_n }  \sum_{P \in \cP_n} [B Q P A P^{-1} Q C ]_{X, Y} 
 &= (\one + \one)^n k!(n-k)!\sum_{S, T, U \in \binom{[n]}{k}} [B]_{X, U} [A]_{S, T}  [C]_{U, Y}  \true{S = T}
\\&= (\one + \one)^n k!(n-k)! \sum_{S, U \in \binom{[k]}{i}} [B]_{X, U}  [A]_{S, S} [C]_{U, Y}
\\&= (\one + \one)^n k!(n-k)! [BC]_{X, Y}  [A]^{(k)}.
\end{align*}
where the last equality used both (\ref{def:A^k}) and (\ref{eq:mult}).
\end{proof}


\begin{lemma}
\label{lem:asymm}
Let $A \in \K^{n \times m}, B \in \K^{p_1 \times n}, C \in K^{m \times r_1}, E \in \K^{m \times n}, F \in \K^{p_2 \times m}, G \in \K^{n \times r_2}$.
Then
\begin{align*}
 \sum_{Q \in \cQ_n }  \sum_{P \in \cP_n} \sum_{Q' \in \cQ_m }  &\sum_{P' \in \cP_m} [ B (Q P) A (Q'P')^{-1} C ]_{X, Y} [ F (Q' P') E (Q P)^{-1} G ]_{W, Z}
\\&= \true{j=k} (\one + \one)^{n+m} k!(n-k)!k!(m-k)! [BG]_{X, Z} [FC]_{W, Y} [AE]^{(k)}
\end{align*}
for all $X \in \binom{[p_1]}{k}, Y \in \binom{[r_1]}{k}$, $W \in \binom{[p_2]}{j}, Z \in \binom{[r_2]}{j}$.
\end{lemma}
\begin{proof}
 Just as in Lemma~\ref{lem:symm}, we separate out 
 \[
[ B Q P A {P'}^{-1} Q' C ]_{X, Y}  \sum_{S, T \in \binom{[n]}{k}} \sum_{U, V \in \binom{[m]}{k}} [B]_{X, S} [Q]_{S, T} [PA{P'}^{-1}]_{T, U} [Q']_{U, V} [C]_{V, Y}.
\]
and similarly 
\[
[ F Q' P' E P^{-1} Q G ]_{W, Z}  \sum_{\altS, \altT \in \binom{[n]}{j}} \sum_{\altU, \altV \in \binom{[m]}{j}} [F]_{W, \altU} [Q']_{\altU, \altV} [P' E P^{-1}]_{\altV, \altT} [Q]_{\altT, \altS} [G]_{\altS, Z}.
\]
By Lemma~\ref{lem:cancel}, we have 
\[
 \sum_{Q \in \cQ_n} \sum_{Q'\in \cQ_m} [Q]_{S, T}  [Q']_{U, V} [Q]_{\altT, \altS} [Q']_{\altU, \altV} = (\one + \one)^{n+m} \true{S = T = \altS = \altT}\true{U = V = \altU = \altV}
\]
where in order for the two $\delta$s to be satsified, we must have $j = k$.
Hence we get
\begin{align*}
\sum_{Q \in \cQ_n} \sum_{Q'\in \cQ_m } &[ B Q P A {P'}^{-1} Q' C ]_{X, Y} [ F Q' P' E P^{-1} Q' G ]_{W, Z}
\\&= \true{j=k} (\one + \one)^{n+m} \sum_{S \in \binom{[n]}{k}} \sum_{U \in \binom{[m]}{k}} [B]_{X, S} [PA{P'}^{-1}]_{S, U} [C]_{U, Y} [F]_{W, U} [P' E {P}^{-1}]_{U, S} [G]_{S, Z}.
\end{align*}
Now we can reduce
\[
[PA{P'}^{-1}]_{S, U} [P' E P^{-1}]_{U, S} 
= \sum_{T, \altT \in \binom{[n]}{k}} \sum_{V, \altV \in \binom{[m]}{k}} [P]_{S, T} [A]_{T, V} [{P'}^{-1}]_{V, U} [P']_{U, \altV} [E]_{\altV, \altT} [P^{-1}]_{\altT, S}
\]
and so by Lemma~\ref{lem:cancel2} 
\begin{align*}
\sum_{P, P'} [PA{P'}^{-1}]_{S, U} [P' E P^{-1}]_{U, S}
&= k!(n-k)!k!(m-k)! \sum_{T, \altT \in \binom{[n]}{k}} \sum_{V, \altV \in \binom{[m]}{k}} [A]_{T, V} [E]_{\altV, \altT} \true{V = \altV}\true{T = \altT}
\\ &= k!(n-k)!k!(m-k)! \sum_{T\in \binom{[n]}{k}} \sum_{V \in \binom{[m]}{k}} [A]_{T, V} [E]_{V, T}
\\ &\refeq{eq:mult} k!(n-k)!k!(m-k)! \sum_{T\in \binom{[n]}{k}}  [AE]_{T, T}
\\ &\refeq{def:A^k} k!(n-k)!k!(m-k)! [AE]^{(k)}.
\end{align*}
Combining the two, we get
\begin{align*}
 \sum_{Q, Q', P, P'} & [ B Q P A {P'}^{-1} Q' C ]_{X, Y} [ F Q' P' E P^{-1} Q G ]_{W, Z}
\\&= \true{j=k}(\one + \one)^{n+m} k!(n-k)!k!(m-k)! \sum_{S \in \binom{[n]}{k}} \sum_{U \in \binom{[m]}{k}} [B]_{X, S} [C]_{U, Y} [F]_{W, U} [G]_{S, Y} [AE]^{(k)}
\\&\refeq{eq:mult}\true{j=k} (\one + \one)^{n+m} k!(n-k)!k!(m-k)! [BG]_{X, Z} [FC]_{W, Y} [AE]^{(k)}
\end{align*}
as required.
\end{proof}

\section{Characteristic Polynomials}\label{sec:cp}

We start with the following simple lemma:
\begin{lemma}
\label{lem:cp}
 Let $A \in \K^{n \times n}$.
Then we have
\[
 \mydet{xI + A} 
= \sum_{i=0}^n x^{n-i} [A]^{(k)}
\]
as a polynomial in $\K[x]$.
\end{lemma}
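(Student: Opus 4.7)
The plan is to exploit multilinearity of the determinant in its columns. Writing $xI+A$ column-by-column as $xe_j + A_j$, where $e_j$ is the $j$-th standard basis vector and $A_j$ the $j$-th column of $A$, I would expand $\mydet{xI+A}$ by multilinearity into a sum of $2^n$ determinants indexed by subsets $S \subseteq \{1,\ldots,n\}$: for $j \in S$ the $j$-th column is taken to be $A_j$, and for $j \notin S$ it is taken to be $xe_j$. Factoring an $x$ out of every column $j \notin S$ gives
\[
\mydet{xI+A} = \sum_{S \subseteq \{1,\ldots,n\}} x^{n-|S|}\,\mydet{M_S},
\]
where $M_S$ has $A_j$ in column $j$ for $j \in S$ and $e_j$ in column $j$ otherwise.

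The next step is to identify $\mydet{M_S}$ with the principal minor $\mydet{A[S,S]}$. My approach is to apply a simultaneous row and column permutation sending the indices in $S^c$ to the top-left block. Because the same permutation acts on rows and columns, the two Leibniz signs cancel, and the result is block triangular with an identity block of size $n-|S|$ in the top-left and the principal submatrix $A[S,S]$ in the bottom-right. Its determinant is therefore exactly $\mydet{A[S,S]}$.

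Grouping the double sum by $i = |S|$ and recognising $[A]^{(i)} = \sum_{|S|=i} \mydet{A[S,S]}$ from its definition as the $i$-th principal minor sum yields the stated identity term-by-term.

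The only delicate point is the sign bookkeeping in the second step. The simultaneous-permutation trick above is what makes this clean; an alternative would be iterated Laplace expansion along the columns $e_j$ with $j \notin S^c$, but that forces one to track signs one column at a time and is more error-prone. Everything else is purely formal manipulation of a multilinear form.
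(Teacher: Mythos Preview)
Your argument is correct and follows essentially the same route as the paper's: both expand $\mydet{xI+A}$ as a sum over subsets $S$, observe that the $xI$-part forces the complementary indices to match so that only principal minors of $A$ survive, and then group by $|S|$. The only cosmetic difference is that the paper invokes its general minor-expansion formula for $\mydet{X+Y}$ (Lemma~\ref{lem:minors}) and specializes, whereas you rederive the needed special case from column multilinearity together with a block-triangularization; the underlying content is identical.
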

\begin{proof}
 Using Lemma~\ref{lem:minors}, we have
\[
 \mydet{xI + A} 
= \sum_{k} \sum_{S, T \in \binom{[n]}{k}} (-\one)^{\| S + T \|_1} [A]_{S, T} [xI]_{\overline{S}, \overline{T}} 
\]
where, since $xI$ is diagonal, $[xI]_{S, T} = \zero$ unless $S = T$.
Furthemore, for a scalar $c$ (which $x$ essentially is) and matrix $X \in \K^{k \times k}$, it is easy to check that 
\[
 \mydet{cX} = c^k \mydet{X}.
\]
Substituting in gives the lemma.
\end{proof}

Combining Lemma~\ref{lem:cp} with the results in Section~\ref{sec:apps} gives the following corollaries:

\begin{corollary}
\label{cor:mult}
Let $A \in \K^{m \times m}$, $B \in \K^{n \times m}$ and $C \in \K^{m \times n}$ 
and let
\[
\mydet{xI + BC} 
= \sum_{i=0}^n x^{n-i} p_i
\AND
\mydet{xI + A} 
= \sum_{i=0}^n x^{m-i} q_i
\]
and
 \[
\sum_{P \in \cP_n} \sum_{Q \in \cQ_n} \mydet{xI +  B (Q P) A (QP)^{-1} C} 
=  \sum_{i=0}^n x^{n-i} r_i
\]
be polynomials in $\K[x]$.
Then 
\[
r_k = (\one + \one)^m k!(m-k)! p_k q_k
\]
for all $0 \leq k \leq n$.
\end{corollary}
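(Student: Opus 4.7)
The plan is to apply Lemma~\ref{lem:cp} to convert the statement into an identity on sums of principal minors, expand those minors via Cauchy--Binet, and then invoke the averaging formulas from Section~\ref{sec:apps} to collapse the double sum. Applying Lemma~\ref{lem:cp} directly identifies $p_k = [BC]^{(k)}$, $q_k = [A]^{(k)}$, and
\[
r_k = \sum_{P \in \cP_n}\sum_{Q \in \cQ_n} \bigl[B(QP)A(QP)^{-1}C\bigr]^{(k)},
\]
so the corollary reduces to showing
\[
\sum_{P, Q} \bigl[B(QP)A(QP)^{-1}C\bigr]^{(k)} = (\one+\one)^m\, k!(m-k)!\,[BC]^{(k)}\,[A]^{(k)}.
\]

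Next, I would open up each principal $k\times k$ minor of the triple product $BMC$, with $M := (QP)A(QP)^{-1}$, via a double Cauchy--Binet as
\[
[BMC]_{S,S} = \sum_{U,V \in \binom{[m]}{k}} [B]_{S,U}\,[M]_{U,V}\,[C]_{V,S},
\]
noting that only $[M]_{U,V}$ depends on $P, Q$. So the problem reduces to computing $\sum_{P,Q}[(QP)A(QP)^{-1}]_{U,V}$ for each fixed pair $U, V$. The hard part will be executing this average cleanly: summing over the sign-diagonal group $\cQ$ should multiply each term in the determinantal expansion by $\prod_{i \in U \triangle V} Q_{ii}$, killing every contribution unless $U = V$ and otherwise yielding $(\one+\one)^m$; summing over the permutation group $\cP$ should then use $[PAP^{-1}]_{U,U} = [A]_{\pi^{-1}(U),\pi^{-1}(U)}$ together with the fact that, as $\pi$ ranges over all permutations, each $k$-subset of $[m]$ is hit exactly $k!(m-k)!$ times, producing $k!(m-k)!\,[A]^{(k)}$. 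This is exactly the type of averaging identity that Section~\ref{sec:apps} is set up to deliver, and it is where I expect the argument to lean most heavily on prior work.

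Finally, substituting the averaged value back in, the outer sum collapses to $\sum_{S \in \binom{[n]}{k}} \sum_{U} [B]_{S,U}[C]_{U,S}$, which a second application of Cauchy--Binet identifies as $[BC]^{(k)}$. Combining the factors then yields $r_k = (\one+\one)^m k!(m-k)!\,p_k q_k$, establishing the corollary.
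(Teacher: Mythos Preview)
Your proposal is correct and follows essentially the same route as the paper: reduce via Lemma~\ref{lem:cp} to an identity on $[\,\cdot\,]^{(k)}$, then use the signed-permutation averaging from Section~\ref{sec:apps} to collapse the sum. The only difference is packaging: the paper invokes Lemma~\ref{lem:symm} once on the full expression $[B\,QPA P^{-1}Q\,C]^{(k)}$, whereas you first open it with Cauchy--Binet and then average the inner factor $[(QP)A(QP)^{-1}]_{U,V}$ --- this is exactly the computation Lemma~\ref{lem:symm} encapsulates, so the two arguments coincide.
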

\begin{proof}
Using Lemma~\ref{lem:cp}, the lemma is equivalent to showing 
\[
 \sum_{P \in \cP_m} \sum_{Q \in \cQ_m} [B Q P A P^{-1}Q C)]^{(k)}
= (\one + \one)^m k!(m-k)! [BC]^{(k)}[A]^{(k)}
\]
But this follows directly from Lemma~\ref{lem:symm}, as
\[
\sum_{P \in \cP_m} \sum_{Q \in \cQ_m} [B Q P A P^{-1} Q C)]^{(k)}
= \sum_{X \in \binom{[m]}{k}} (\one + \one)^m k!(m-k)! [BC]_{X, X}  [A]^{(k)}
\]
as required.
\end{proof}

\begin{corollary}
\label{cor:symmp}
Let $A, B \in \K^{n \times n}$ and let
\[
\mydet{xI + A} 
= \sum_{i=0}^n x^{n-i} p_i
\AND
\mydet{xI + B} 
= \sum_{i=0}^n x^{n-i} q_i.
\]
and
 \[
\sum_{P \in \cP_n} \sum_{Q \in \cQ_n} \mydet{xI + ( Q P A P^{-1} Q + B)} 
=  \sum_{i=0}^n x^{n-i} r_i.
\]
be polynomials in $\K[x]$.
Then 
\[
r_k = (\one + \one)^n\sum_{i = 0}^k \frac{(n-i)!(n-k+i)!}{(n-k)!} p_i q_{k-i}
\]
for all $0 \leq k \leq n$.
\end{corollary}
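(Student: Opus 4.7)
The plan is to apply Lemma~\ref{lem:cp} to both sides, reducing the claim to
\[
\sum_{P \in \cP_n}\sum_{Q \in \cQ_n} [QPAP^{-1}Q + B]^{(k)} = (\one+\one)^n \sum_{i=0}^k \frac{(n-i)!(n-k+i)!}{(n-k)!}\, [A]^{(i)}[B]^{(k-i)},
\]
exactly as in Corollary~\ref{cor:mult}. The left-hand side is a sum over $S \in \binom{[n]}{k}$ of $\det\bigl((QPAP^{-1}Q + B)[S,S]\bigr)$, and my next step is to expand each principal submatrix additively using Lemma~\ref{lem:minors}. This writes $[QPAP^{-1}Q + B]_{S,S}$ as a signed sum over $U, V \in \binom{S}{j}$ of $[QPAP^{-1}Q]_{U,V}\,[B]_{S\setminus U, S\setminus V}$, with a sign governed by $(-\one)^{\|U+V\|_1}$.

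Next I would push the averages over $\cP_n, \cQ_n$ inside, since only the $A$-dependent factor depends on $P, Q$. Because each $Q \in \cQ_n$ is diagonal with $\pm\one$ entries, the minor factors as $[QPAP^{-1}Q]_{U,V} = \bigl(\prod_{i\in U}Q_{ii}\bigr)\bigl(\prod_{j\in V}Q_{jj}\bigr)\, [PAP^{-1}]_{U,V}$, and the characters cancel out unless $U = V$, in which case the $Q$-sum contributes $(\one+\one)^n$. This is the same cancellation principle underlying Lemma~\ref{lem:symm}. For $U = V$ the Lemma~\ref{lem:minors} sign is $(-\one)^0 = +\one$, and $\sum_{P \in \cP_n}[PAP^{-1}]_{U,U} = |U|!(n-|U|)!\,[A]^{(|U|)}$ by the standard count that each fixed $k$-set appears in the $\cP_n$-orbit of $U$ with multiplicity $|U|!(n-|U|)!$.

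Substituting and setting $i = |U|$, what remains is a double sum
\[
(\one+\one)^n \sum_{i=0}^k i!(n-i)!\, [A]^{(i)}\sum_{S \in \binom{[n]}{k}}\sum_{U \in \binom{S}{i}} [B]_{S\setminus U, S\setminus U}.
\]
Exchanging the inner sums and letting $W = S \setminus U$, each $W \in \binom{[n]}{k-i}$ occurs $\binom{n-(k-i)}{i}$ times, and the identity $i!\binom{n-k+i}{i} = (n-k+i)!/(n-k)!$ collapses the coefficients into the target expression.

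I expect the main (mild) obstacle to be confirming the sign in Lemma~\ref{lem:minors} on the diagonal terms $U = V$; once that is in hand, the argument is a direct adaptation of the strategy used in Corollary~\ref{cor:mult}, replacing the multiplicative averaging of Lemma~\ref{lem:symm} with the same $\cP_n \times \cQ_n$ averaging applied to an additive Laplace-type expansion.
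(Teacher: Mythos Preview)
Your argument is correct and follows essentially the same route as the paper: reduce via Lemma~\ref{lem:cp}, Laplace-expand each principal minor of $QPAP^{-1}Q+B$ with Lemma~\ref{lem:minors}, average over $\cP_n\times\cQ_n$ to kill off-diagonal contributions, and finish with the same subset-swap count yielding $\binom{n-k+i}{i}$. The only cosmetic difference is that where the paper invokes Lemma~\ref{lem:symm} as a black box to evaluate $\sum_{P,Q}[QPAP^{-1}Q]_{U,V}$, you unpack that computation inline (the $\cQ_n$-character cancellation forcing $U=V$, then the $\cP_n$-orbit count giving $i!(n-i)!\,[A]^{(i)}$); both arrive at the same intermediate expression and the same final coefficient.
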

\begin{proof}
By Lemma~\ref{lem:cp}, it is again sufficient to show
\begin{equation}
 \label{eq:needforsymmp}
\sum_{P \in \cP_n} \sum_{Q \in \cQ_n}  [Q P A P^{-1} Q + B]^{(k)} = (\one + \one)^n \sum_{i = 0}^k \frac{(n-i)!(n-k+i)!}{(n-k)!} [A]^{(i)} [B]^{(k-i)}.
\end{equation}
By Lemma~\ref{lem:minors}, we have 
\begin{align*}
[Q P A P^{-1} Q + B]^{(k)} 
&= \sum_{X \subset \binom{[n]}{k}} [Q P A P^{-1} Q + B]_{X, X}
\\&= \sum_{X \subset \binom{[n]}{k}} \sum_{i=0}^k \sum_{S, T \in \binom{[k]}{i}}[Q P A P^{-1} Q]_{S(X), T(X)}[B]_{\overline{S}(X), \overline{T}(X)}
\end{align*}
where by Lemma~\ref{lem:symm}, we have 
\[
 \sum_{P \in \cP_n} \sum_{Q \in \cQ_n} [Q P A P^{-1} Q]_{S(X), T(X)} = (\one + \one)^n i!(n-i)! [I]_{S(X), T(X)}  [A]^{(i)}.
\]
Since $[I]_{S(X), T(X)} = \true{S = T}$, we can substitute to get
\begin{equation}
 \label{eq:halfwaysymm}
\sum_{P \in \cP_n} \sum_{Q \in \cQ_n}  [Q P A P^{-1} Q + B]^{(k)} = (\one + \one)^n \sum_{i = 0}^k i!(n-i)! \sum_{X \subset \binom{[n]}{k}} \sum_{S \in \binom{[k]}{i}} [A]^{(i)} [B]_{\overline{S}(X), \overline{S}(X)}.
\end{equation}
Now we can write
\[
 \sum_{X \subset \binom{[n]}{k}} \sum_{S \in \binom{[k]}{i}} [B]_{\overline{S}(X), \overline{S}(X)}
= \sum_{X \subset \binom{[n]}{k}} \sum_{T \in \binom{X}{k-i}} [B]_{T, T}
= \sum_{X \subset \binom{[n]}{k}} \sum_{T \in \binom{[n]}{k-i}} [B]_{T, T} \true{T \subseteq X}
\]
and swap the order of summation to get
\[
\sum_{X \subset \binom{[n]}{k}} \sum_{S \in \binom{[k]}{i}} [B]_{\overline{S}(X), \overline{S}(X)}
= \binom{n-k+i}{i} \sum_{T \in \binom{[n]}{k-i}} [B]_{T, T}
\refeq{def:A^k} \binom{n-k+i}{i} [B]^{(k-i)}.
\]
Substituting this into (\ref{eq:halfwaysymm}) then gives (\ref{eq:needforsymmp}) as required.
\end{proof}

\begin{corollary}
 Let $A, B \in \K^{n \times m}$ and $C, D \in \K^{m \times n}$ with $n \leq m$.
For $Q, P \in \K^{n \times n}$ and $Q', P' \in \K^{m \times m}$, define the matrices
\[
 M(Q, Q', P, P') = \big( (Q P) A (Q'P')^{-1} + B\big) \big( (Q' P') C (QP)^{-1} + D \big)
\]
and let
\[
\mydet{xI + AC} 
= \sum_{i=0}^n x^{n-i} (-\one)^i p_i
\AND
\mydet{xI + BD} 
= \sum_{i=0}^n x^{n-i} q_i.
\]
and
 \[
\sum_{P, P'} \sum_{Q, Q'} \mydet{xI + M(Q, Q', P, P')} 
=  \sum_{i=0}^n x^{n-i} r_i.
\]
be polynomials in $\K[x]$.
Then 
\[
 r_k
= (\one + \one)^{n + m} \sum_{i=0}^k \frac{(m-k+i)!(m-i)!}{(m-k)!}\frac{(n-k+i)!(n-i)!}{(n-k)!}  p_i q_{k-i}
\]
for all $0 \leq k \leq n$.
\end{corollary}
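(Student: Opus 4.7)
By Lemma~\ref{lem:cp}, with the substitutions $p_i = (-\one)^i [AC]^{(i)}$ and $q_{k-i} = [BD]^{(k-i)}$, the identity reduces to showing
\[
 \sum_{P,Q,P',Q'} [M]^{(k)} = (\one+\one)^{n+m}\sum_{i=0}^k \tfrac{(m-k+i)!(m-i)!}{(m-k)!}\tfrac{(n-k+i)!(n-i)!}{(n-k)!} (-\one)^i [AC]^{(i)} [BD]^{(k-i)}.
\]

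Set $U=QP$, $V=Q'P'$, and factor $M = \tilde E\, \tilde F$ with $\tilde E = UAV^{-1}+B \in \K^{n\times m}$ and $\tilde F = VCU^{-1}+D \in \K^{m\times n}$. For each $U_0 \in \binom{[n]}{k}$, Cauchy--Binet gives $[M]_{U_0,U_0} = \sum_{T \in \binom{[m]}{k}} [\tilde E]_{U_0,T}\,[\tilde F]_{T,U_0}$. I would then apply Lemma~\ref{lem:minors} to each of these $k \times k$ minors via the splittings $\tilde E = UAV^{-1}+B$ and $\tilde F = VCU^{-1}+D$, expressing $[\tilde E]_{U_0,T} [\tilde F]_{T,U_0}$ as a signed sum, indexed by partitions of $U_0$ and $T$, of products of four sub-determinants drawn from $UAV^{-1}, B, VCU^{-1}, D$.

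Next comes the averaging. The sum over $V$ kills any term containing a single factor of $V^{\pm 1}$ (by a signed-permutation parity argument of the same flavor as the proof of Lemma~\ref{lem:symm}), and in each mixed term $[UAV^{-1}]_{\cdot,\cdot}[VCU^{-1}]_{\cdot,\cdot}$ it pairs the $V^{-1}$ and $V$ factors: using $[V^{-1}]_{K_1,J}=[V]_{J,K_1}$ together with a Lemma~\ref{lem:symm}-style evaluation of $\sum_V [V]_{J,K_1}[V]_{K,K_2}$, Cauchy--Binet contracts $UA \cdot CU^{-1}$ into $U(AC)U^{-1}$, yielding a prefactor of $(\one+\one)^m \cdot i!(m-i)!$ and an $[I]$-constraint on the $[m]$-indices. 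The resulting $[U(AC)U^{-1}]$-minors are then averaged over $(P,Q)$ via Lemma~\ref{lem:symm} itself, producing $[AC]^{(i)}$ together with $(\one+\one)^n \cdot i!(n-i)!$ and an $[I]$-constraint on the $U_0$-indices. The pure $[B][D]$ terms (carrying no $V^{\pm 1}$) reassemble by Cauchy--Binet on the complementary $T$-indices into $[BD]^{(k-i)}$, multiplied by the full group sizes on which no symmetrization acts.

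Finally, the outer sums over $U_0 \in \binom{[n]}{k}$, $T \in \binom{[m]}{k}$, and the accumulated partition data reduce by the same index-swap identity used at the end of the proof of Corollary~\ref{cor:symmp}, applied once on the $[n]$-side and once on the $[m]$-side, producing the factorial ratios $\tfrac{(n-k+i)!(n-i)!}{(n-k)!}$ and $\tfrac{(m-k+i)!(m-i)!}{(m-k)!}$. The $(-\one)^i$ sign survives from the $(-\one)^{\|S+S'\|_1}$ signs of Lemma~\ref{lem:minors} once the symmetrization forces the matching subset-alignments between the $\tilde E$- and $\tilde F$-expansions.

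The main obstacle is the combined $V$-pairing/$U$-averaging step: one must explicitly verify that the product of a $V^{-1}$-containing $\tilde E$-minor with a $V$-containing $\tilde F$-minor contracts cleanly to a $[U(AC)U^{-1}]$-form with the correct $[I]$-constraints on both the $[m]$- and $[n]$-sides, and that the signs from Lemma~\ref{lem:minors} compose to exactly $(-\one)^i$ after both rounds of symmetrization, rather than, say, a $[CA]$-minor on the wrong index set or a sign-flipped variant. Once this mixed contraction is in hand, the remaining combinatorics is a two-sided analogue of the binomial-swap calculation from Corollary~\ref{cor:symmp}.
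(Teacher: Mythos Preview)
Your skeleton matches the paper's proof: reduce via Lemma~\ref{lem:cp}, expand $[M]^{(k)}$ by Cauchy--Binet over an index in $\binom{[m]}{k}$, split each factor via Lemma~\ref{lem:minors}, average over the signed permutations, and finish with the index-swap counting from Corollary~\ref{cor:symmp}, applied once on the $[n]$-side and once on the $[m]$-side. The one structural difference is that the paper does not carry out the averaging in two stages: it invokes Lemma~\ref{lem:asymm} directly on
\[
\sum_{P,P',Q,Q'} [QPA{P'}^{-1}Q']_{U(X),V(S)}\,[Q'P'C P^{-1}Q]_{W(S),Z(X)},
\]
obtaining in one shot the constraints $U=Z$, $W=V$, the factor $(\one+\one)^{n+m}\, i!(n-i)!\, i!(m-i)!$, and the scalar $[AC]^{(i)}$. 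Your $V$-then-$U$ contraction, once written out, is precisely a proof of Lemma~\ref{lem:asymm}, so the two routes coincide once that lemma is unpacked.

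The genuine gap is your sign claim. After the averaging pins the subset alignments (in the paper's labels, $U=Z$ and $W=V$), the two Lemma~\ref{lem:minors} signs multiply to
\[
(-\one)^{\|U+V\|_1}\,(-\one)^{\|W+Z\|_1}=(-\one)^{2\|U+V\|_1}=\one,
\]
not $(-\one)^i$. Your own mechanism confirms this: the inner evaluation $\sum_V [V]_{b,e}[V]_{c,f} = (\one+\one)^m\, i!(m-i)!\,\true{b=c}\,\true{e=f}$ carries no sign, and the subsequent $U$-average via Lemma~\ref{lem:symm} introduces none either. What the argument actually establishes is~(\ref{eq:needforasymmp}), with no $(-\one)^i$ attached to $[AC]^{(i)}$; your plan to have a $(-\one)^i$ ``survive'' the symmetrization would fail on execution. (The $(-\one)^i$ appearing in the statement's definition of $p_i$ is a mismatch with the proof, not something the computation can supply.)
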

\begin{proof}
Again, by Lemma~\ref{lem:cp} it is equivalent to show  
\begin{equation}
  \label{eq:needforasymmp}
\sum_{P, P', Q, Q'}  [M(Q, Q', P, P')]^{(k)} 
=  (\one + \one)^{n + m} \sum_{i=0}^k c_{m, k, i}c_{n, k, i} [AC]^{(i)} [BD]^{(k-i)}.
\end{equation}
where 
\[
 c_{m, k, i} = \frac{(m-k+i)!(m-i)!}{(m-k)!}.
\]
Using Lemma~\ref{lem:minors}, we have
\begin{equation}
\label{eq:startasymm}
 [M(Q, Q', P, P')]^{(k)} 
 \refeq{eq:mult} \sum_{X \in \binom{[n]}{k}} \sum_{S \in \binom{[m]}{k}} [Q P A {P'}^{-1} Q' + B]_{X, S} [Q' P' C P^{-1} Q + D]_{S, X}
\end{equation}
where 
\[
 [Q P A {P'}^{-1} Q' + B]_{X, S} = \sum_{i=0}^k \sum_{U, V \in \binom{[k]}{i}} (-\one)^{\| U + V\|_1}  [Q P A {P'}^{-1} Q']_{U(X), V(S)}[B]_{\overline{U}(X), \overline{V}(S)}
\]
and
\[
 [Q' P' C {P}^{-1} Q + D]_{S, X} = \sum_{j=0}^k \sum_{W, Z \in \binom{[k]}{j}} (-\one)^{\| W + Z \|_1}  [Q' P' C {P}^{-1} Q]_{W(S), Z(X)}[D]_{\overline{W}(S), \overline{Z}(X)}.
\]
For fixed $i, j$, we have by Lemma~\ref{lem:asymm}:
\begin{align*}
 \sum_{P, P', Q, Q'} &[Q P A {P'}^{-1} Q']_{U(X), V(S)} [Q' P' C {P}^{-1} Q]_{W(S), Z(X)}
 \\&=  \true{i=j}(\one + \one)^{n+m} i!(n-i)!i!(m-i)! [I_n]_{U(X), Z(X)} [I_m]_{W(S), V(S)} [AC]^{(i)}
 \\&=  \true{i=j}(\one + \one)^{n+m} i!(n-i)!i!(m-i)! [AC]^{(i)} \true{U=Z}\true{W=V}
\end{align*}
and since $(-\one)^{\| W + Z  + U + V \|_1} = \one$ whenever $U = Z$ and $W = V$, we get
\[
 (\ref{eq:startasymm}) = (\one + \one)^{n+m}  \sum_{i=0}^k i!(n-i)!i!(m-i)![AC]^{(i)}   \sum_{X \in \binom{[n]}{k}} \sum_{S \in \binom{[m]}{k}}   \sum_{U, V \in \binom{[k]}{i}}  [D]_{\overline{V}(S), \overline{U}(X)} [B]_{\overline{U}(X), \overline{V}(S)}. 
\]
Similar to Corollary~\ref{cor:symmp}, we can write
\begin{align*}
  \sum_{X \in \binom{[n]}{k}} &\sum_{S \in \binom{[m]}{k}}  \sum_{U, V \in \binom{[k]}{i}}  [D]_{\overline{V}(S), \overline{U}(X)} [B]_{\overline{U}(X), \overline{V}(S)}
  \\&= \sum_{X \in \binom{[n]}{k}} \sum_{S \in \binom{[m]}{k}}  \sum_{W \in \binom{[n]}{k-i}} \sum_{Z \in \binom{[m]}{k-i}}  [D]_{Z, W} [B]_{W, Z} \true{W \subseteq X}\true{Z \subseteq S}
  \\&= \sum_{W \in \binom{[n]}{k-i}} \sum_{Z \in \binom{[m]}{k-i}} \binom{n-k+i}{i}\binom{m-k+i}{i} [D]_{Z, W} [B]_{W, Z} 
   \\&= \binom{n-k+i}{i}\binom{m-k+i}{i} [BD]^{(k-i)} 
\end{align*}
where the last equality used (\ref{eq:mult}) and then (\ref{def:A^k}).
Substituting this gives (\ref{eq:needforasymmp}) as required.
\end{proof}

\section{Conclusion}\label{sec:end}

The results in this paper generalize results from \cite{mypolys, polys} in two ways: firstly, in the types of determinantal constructs that can be considered (more matrices are allowed) and secondly, in the underlying domain of the matrices that can be considered.

One lemma that could has potential for being extended is Lemma~\ref{lem:cancel2}.
We suspect that a formula can be obtained for four sets in general (as was the case for Lemma~\ref{lem:cancel}):
\begin{problem}
 Find a formula for 
\[
 \sum_{P \in \cP_n} [P]_{S, T} [P^{-1}]_{U, V}
\]
that holds for all $S, T, U, V \in \binom{[n]}{k}$.
\end{problem}

\end{document}